\long\def\@makefntext#1{
\protect\noindent \hbox to 3.2pt {\hskip-.9pt  
$^{{\eightrm\@thefnmark}}$\hfil}#1\hfill}		%CAN BE USED 
\def\@makefnmark{\hbox to 0pt{$^{\@thefnmark}$\hss}}	%ORIGINAL 
\def\ps@myheadings{\let\@mkboth\@gobbletwo		%SIZE OF R/H NOS.
\def\@oddhead{\hbox{}
\rightmark\hfil\eightrm\thepage}   
\def\@oddfoot{}\def\@evenhead{\eightrm\thepage\hfil
\leftmark\hbox{}}\def\@evenfoot{}
\def\sectionmark##1{}\def\subsectionmark##1{}}
\def\ps@plain{\let\@mkboth\@gobbletwo
     \def\@oddhead{}\def\@oddfoot{\eightrm\hfil\thepage
     \hfil}\def\@evenhead{}\let\@evenfoot\@oddfoot}
\newcounter{sectionc}\newcounter{subsectionc}\newcounter{subsubsectionc}
\renewcommand{\section}[1] {\vspace{12pt}\addtocounter{sectionc}{1} 
\setcounter{theorem}{0} \setcounter{equation}{0}
\setcounter{subsectionc}{0}\setcounter{subsubsectionc}{0}\noindent 
	{\tenbf\thesectionc. #1}\par\vspace{5pt}}
\renewcommand{\subsection}[1] {\vspace{12pt}\addtocounter{subsectionc}{1} 
	\setcounter{subsubsectionc}{0}\noindent 
	{\bf\thesectionc.\thesubsectionc. 
	{\kern1pt \bfit #1}}\par\vspace{5pt}}
\renewcommand{\subsubsection}[1] {\vspace{12pt}
	\addtocounter{subsubsectionc}{1}
	\noindent
	{\tenrm\thesectionc.\thesubsectionc.\thesubsubsectionc.	{\kern1pt 
	\it #1}}\par\vspace{5pt}}
\newcommand{\nonumsection}[1] {\vspace{12pt}\noindent{\tenbf #1}
	\par\vspace{5pt}}
\newtheorem{theorem}{Theorem}[sectionc]
\newtheorem{proposition}[theorem]{Proposition}
\newtheorem{corollary}[theorem]{Corollary}
\newtheorem{definition}[theorem]{Definition}
\numberwithin{equation}{sectionc}
\def\abstracts#1#2#3#4{{
	\centering{\begin{minipage}{4.5in}\footnotesize\baselineskip=10pt
	\centerline{ABSTRACT} 
	\parindent=15pt #1\par 
	\parindent=15pt #2\par
	\parindent=15pt #3\par
	\parindent=15pt #4\par
	\end{minipage}}\par}} 
\def\keywords#1{{ 
	\centering{\begin{minipage}{4.5in}\footnotesize\baselineskip=10pt
	{\footnotesize\it Keywords}\/: #1
	\end{minipage}}\par}}
\newcommand{\textlineskip}{\baselineskip=13pt}
\newcommand{\smalllineskip}{\baselineskip=10pt}
\renewenvironment{thebibliography}[1]
	{\frenchspacing
	 \ninerm\baselineskip=11pt
	 \begin{list}{[\arabic{enumi}]}
	{\usecounter{enumi}\setlength{\parsep}{0pt}
	 \setlength{\leftmargin 19pt}{\rightmargin 0pt}   %[10--99] ITEMS
	 \setlength{\itemsep}{0pt} \settowidth
	{\labelwidth}{[#1]}\sloppy}}{\end{list}}
\newcommand{\fcaption}[1]{
        \refstepcounter{figure}
        \setbox\@tempboxa = \hbox{\footnotesize Fig.~\thefigure. #1}
        \ifdim \wd\@tempboxa > 5in
           {\begin{center}
        \parbox{5in}{\footnotesize\smalllineskip Fig.~\thefigure. #1}
            \end{center}}
        \else
             {\begin{center}
             {\footnotesize Fig.~\thefigure. #1}
              \end{center}}
        \fi}
\def\runninghead#1#2{\pagestyle{myheadings}
\markboth{{\protect\footnotesize\it{\quad #1}}\hfill}
{\hfill{\protect\footnotesize\it{#2\quad}}}}
\font\tenrm=cmr10
\font\tenbf=cmbx10
\font\bfit=cmbxti10 at 10pt
\font\ninerm=cmr9
\font\nineit=cmti9
\font\eightrm=cmr8
\newcommand{\TryPackage}[3]{\IfFileExists{#1.sty}{\usepackage{#1}#2}{#3}}
\newcommand{\CC}{{\mathbb C}}
\newcommand{\SLC}{{SL_2({\mathbb C})}}
\date{\today}
\begin{document}
\setlength{\textheight}{7.7truein}  %for 2nd page onwards

\runninghead{\quad Repeated boundary slopes for 2-bridge knots }
{Repeated boundary slopes for 2-bridge knots \quad}

\normalsize\textlineskip
\thispagestyle{empty}
\setcounter{page}{1}

%\copyrightheading{}		    %{Vol.~0, No.~0 (1999) 00--00}

\vspace*{0.88truein}

\centerline{\bf REPEATED BOUNDARY SLOPES FOR 2-BRIDGE KNOTS }
\baselineskip=13pt
%\centerline{\bf FIBERED HOMOLOGY SPHERES AND SURGERIES ON TWIST KNOTS}
\vspace*{0.37truein}

\vspace*{10pt}
\centerline{\footnotesize CYNTHIA L. CURTIS}
\baselineskip=12pt
\centerline{\footnotesize\it Department of Mathematics \& Statistics}
\baselineskip=10pt
\centerline{\footnotesize\it The College of New Jersey}
\baselineskip=10pt
\centerline{\footnotesize\it Ewing, NJ}
\baselineskip=10pt
\centerline{\footnotesize\it 08628}
\baselineskip=10pt
\centerline{\footnotesize\it {\tt ccurtis@tcnj.edu}}

\vspace{10pt}
\centerline{\footnotesize WILLIAM FRANCZAK}
\baselineskip=12pt
\centerline{\footnotesize\it Department of Mathematics}
\baselineskip=10pt
\centerline{\footnotesize\it Lehigh University}
\baselineskip=10pt
\centerline{\footnotesize\it Bethlehem, PA}
\baselineskip=10pt
\centerline{\footnotesize\it 18015}
\baselineskip=10pt
\centerline{\footnotesize\it {\tt wjf212@lehigh.edu}}

\vspace*{10pt}
\centerline{\footnotesize RANDOPLH J. LEISER}
\baselineskip=12pt
\centerline{\footnotesize\it Department of Mathematical Sciences}
\baselineskip=10pt
\centerline{\footnotesize\it New Jersey Institute of Technology}
\baselineskip=10pt
\centerline{\footnotesize\it Newark, NJ}
\baselineskip=10pt
\centerline{\footnotesize\it 07102}
\baselineskip=10pt
\centerline{\footnotesize\it {\tt rjl22@njit.edu}}

\vspace*{10pt}
\centerline{\footnotesize RYAN J. MANHEIMER}
\baselineskip=12pt
\centerline{\footnotesize\it Department of Mathematics \& Statistics}
\baselineskip=10pt
\centerline{\footnotesize\it The College of New Jersey}
\baselineskip=10pt
\centerline{\footnotesize\it Ewing, NJ}
\baselineskip=10pt
\centerline{\footnotesize\it 08628}
\baselineskip=10pt
\centerline{\footnotesize\it {\tt ryanmanheimer@gmail.com}}

\vspace*{0.225truein}
%\publisher{}

\vspace*{10pt}
\subjclass{Primary: 57M27, Secondary: 57M25, 57M05}
\keywords{boundary slopes; knots; }

\vspace*{0.21truein} 
\abstracts{We investigate the question of when distinct branched surfaces in the complement of a 2-bridge knot support essential surfaces with identical boundary slopes. We determine all instances in which this occurs and identify an infinite family of knots for which no boundary slopes are repeated.}{}{}{}

\newpage

\section{Introduction}
The essential surfaces in the complement of a 2-bridge knot were classified by Hatcher and Thurston in \cite{HT}. If $K = K(\alpha,\beta)$ is a 2-bridge knot, the surfaces are supported by branched surfaces $\Sigma[n_1,n_2,...,n_k]$, where 
$$\frac{\beta}{\alpha} = r + \frac{1}{n_1 + \frac{1}{n_2 + \frac{}{\ddots + \frac{1}{n_k}}}}$$ is a continued fraction expansion of $\beta/\alpha.$
Each such branched surface may support one, two or infinitely many connected, non-isotopic essential surfaces, all with the same boundary slope.

Boundary slopes of essential surfaces have become increasingly important computationally, for example in the computation  of Culler-Gordon-Luecke-Shalen semi-norms (see \cite{CGLS}, \cite{BZ2},\cite{C} and \cite{O}), $\SLC$ Casson invariants (see \cite{C},  \cite{BC06}, \cite{BC08}, and \cite{BC12}), and $A$-polynomials (see \cite{CCGLS}, \cite{BZ2} and \cite{BC12}). Thus, we have become interested in the question of when distinct branched surfaces support surfaces with identical boundary slopes. We call such boundary slopes {\it repeated boundary slopes.} Such slopes correspond to distinct families of ideal points in the character variety of $K(\alpha,\beta)$, and they may correspond to points on distinct curves in the character variety. Accordingly they may contribute to distinct factors of the $A$-polynomial, and their contributions will add to give the weight of the boundary slope in the Culler-Gordon-Luecke-Shalen semi-norm. To better understand this phenomenon, we began with 2-bridge knots, as here both the essential surfaces and to a lesser extent the character varieties are well-understood. (See for example \cite{BC12}, \cite{O}, and \cite{Bu}.)

In Corollary \ref{repeats} we characterize precisely when such repeats occur. From this chacterization it is clear that this will happen frequently. However in contrast in Theorem \ref{norepeats} we identify infinite families of knots for which no repeated boundary slopes occur.

We briefly outline the contents of the paper. In Section 2 we review Hatcher and Thurston's  branched surfaces $\Sigma[n_1,...,n_k]$ along with the associated essential surfaces and their boundary slopes.  In Section 3 we develop an algorithm for determining all continued fraction expansions for a given fraction, thereby identifying in a new way all of the Hatcher-Thurston branched surfaces for a given knot. Finally in Section 4 we determine the boundary slopes of the associated surfaces. We provide our own examples of repeated boundary slopes arising from certain symmetries and note the prevalence of repeated boundary slopes not arising from any clear symmetry. Finally we provide an infinite family of knots which have no repeated slopes.

\section{Incompressible surfaces for 2-bridge knots}
Throughout the paper, let $K = K(\alpha,\beta)$ be a 2-bridge knot, where $\alpha > 0$. Recall that the mirror image of $K(\alpha, \beta)$ is $K(\alpha, -\beta)$, and since two 2-bridge knots $K(\alpha,\beta)$ and $K(\alpha',\beta')$ are equivalent if and only if $\alpha = \alpha'$ and $\beta' \equiv\beta^{\pm 1}$ mod $\alpha$, we note that $K(\alpha, -\beta)$ is equivalent to $K(\alpha,\alpha-\beta)$. Thus, replacing $K(\alpha,\beta)$ with its mirror image if necessary, we assume $0<\beta<\alpha/2$. (For details on these equivalences, see \cite{BZ85}.)

We begin with a review of Hatcher and Thurston's results. 
A surface $S$ in a 3-manifold-with-boundary $M$ is said to be \textit{incompressible} if for any disc $D \subset M$ with $D \cap S = \partial D$, there exists a disc  $D' \subset S$, with $\partial D' = \partial D$.  A surface $S$ is $\partial${\em - incompressible} if for each disc $D \subset M$ with $D \cap S = \partial_+D$ and $D \cap \partial M = \partial_-D$ there is a disc $D' \subset S$ with $\partial_+ D' =\partial_+ D$ and $\partial_- D'  \subset \partial S$. A surface $S\subset M$ is \textit{essential} if it is both incompressible and $\partial$-incompressible and if no component of $S$ is boundary-parallel.

If $M$ is the complement of a tubular neighborhood $\tau(K)$ of a knot $K$ in $S^3$, where $m$ and $\ell$ are the meridian and longitude of $K$, respectively, then any essential surface $S$ in $M$ has boundary a collection of parallel curves on $\partial M$. The homology class of these curves may be described as $pm+q\ell$ for some relatively prime integers $p$ and $q$, and we call $p/q$ the \textit{boundary slope} of $S$.

Let $K(\alpha,\beta)$ be a 2-bridge knot, and let $$\frac{\beta}{\alpha} = r + \frac{1}{n_1 + \frac{1}{n_2 + \frac{}{\ddots + \frac{1}{n_k}}}}$$  be a continued fraction expansion of $\beta/\alpha.$ Henceforth we denote such a continued fraction expansion by $[n_1,...,n_k]$. 

The knot $K$ bounds a branched surface consisting of twisted bands plumbed together as shown, where at each plumbing we attach two disks: both the horizontal square depicted and its complement in the plane shown, compactified via a single point at infinity. This branched surface carries a family $S_n(t_1,...,t_{k-1})$ of surfaces, where $n \geq 1$ and $0 \leq t_i \leq n$ consisting of $n$ parallel copies of each band joined by $t_i$ copies of the outer plumbing disk and $n-t_i$ copies of the inner plumbing square at the $i^{th}$ plumbing. 

\begin{figure}[hb]
\begin{center}
\leavevmode\hbox{}
\includegraphics[scale=0.33]{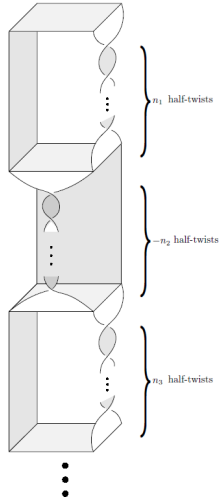}
\caption{The branched surface corresponding to $[n_1,n_2,...,n_k]$} 
\label{branchedsurface}
\end{center}
\end{figure}

Please note that our convention on the signs of the twisted bands follows that of Ohtsuki in \cite{O} and  differs from that of Hatcher and Thurston in \cite{HT}.

The following theorem is a combination of results from \cite{HT}, Theorem 1, Propostion 1, and Proposition 2:
\begin{theorem}
A surface in $S^3 - \tau(K(\alpha,\beta))$ is essential if and only if it is isotopic to one of the surfaces $S_n(t_1,...,t_{k-1})$ for a continued fraction expansion $N = [n_1,n_2,...,n_k]$ of $\beta/\alpha$ for which $|n_i| \geq 2$ for every $i$. These surfaces satisfy the following properties:
\begin{itemize}
	\item Surfaces corresponding to distinct continued fraction expansions of $\beta/\alpha$ are not isotopic. Isotopy among surfaces $S_n(t_1,t_2,...,t_{k-1})$ corresponding to a single continued fraction expansion $[n_1, n_2,...,n_k]$ is determined by the relation: $S_n(t_1,t_2,...,t_{k-1})$ is isotopic to $S_n(t_1,t_2,...,t_{i-1} + 1, t_i+1,..,t_{k-1})$ if $|n_i|=2$. If $i=1$ this means that $S_n(t_1,t_2,...,t_{k-1})$ is isotopic to $S_n(t_1+1,t_2,...,t_{k-1})$ and similarly for $i=k$.
	\item The surfaces $S_n(t_1,t_2,...,t_{k-1})$ are connected if and only if $n=1$, or $n=2$ and at least one quotient $n_i$ is odd, or $n>2$ and at least two of the quotients $n_i$ are odd.
	\item The boundary slope of the surface $S_n(t_1,t_2,...,t_{k-1})$ is given by 
	$$2[(n^+ - n^-) - (n^+_0 - n^-_0)]$$
	where $n^+ = \#{(-1)^{i+1}n_i>0}$, $n^- = \#{(-1)^{i+1}n_i<0}$, and $n^+_0$ and $n^-_0$ are the analogous counts for the unique continued fraction expansion of $\beta/\alpha$ with each $n_i$ even.

\end{itemize}
\end{theorem}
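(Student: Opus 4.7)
The plan is to follow the Hatcher--Thurston strategy: realize any essential surface in $S^3 - \tau(K)$ as one carried by a branched surface associated to an edgepath in a Farey-type diagram, and then extract the three numerical invariants (isotopy class, connectedness, boundary slope) by a combinatorial analysis of how the surface is built from its bands and plumbing disks.

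First I would fix a bridge sphere $P \subset S^3$ meeting $K$ transversely in four points, and put an essential surface $S \subset S^3 - \tau(K)$ in general position with respect to $P$, so that $S \cap P$ is a disjoint union of simple closed curves and proper arcs. Since $S$ is incompressible and $P$ cuts the knot exterior into two rational tangle exteriors (each a 3-ball minus two unknotted arcs), an innermost-disk argument removes all closed curves in $S\cap P$, and $\partial$-incompressibility together with an outermost-arc argument standardizes the arcs on each 4-punctured sphere. The arcs on each side assemble into an edgepath in the diagram $\mathcal{D}$ of slopes on the 4-punctured sphere (essentially the Farey complex); tracking where this path can go, subject to $S$ being essential and not boundary-parallel, forces it to be monotone, and hence to record a continued fraction expansion of $\beta/\alpha$. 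The restriction $|n_i|\geq 2$ appears because $|n_i|=1$ produces a compressing disk in one of the tangles. Reassembling $S$ from its edgepath identifies it with one of the $S_n(t_1,\ldots,t_{k-1})$.

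For the isotopy statement I would observe that the edgepath is recoverable from $S\cap P$ up to isotopy and so distinct continued fractions give non-isotopic surfaces; the exceptional move when $|n_i|=2$ corresponds to flipping a single half-twist box end-over-end, realized by sliding one layer across the associated band. For connectedness I would label the $n$ parallel layers at a fixed band and track how successive plumbings permute them. Each $n_i$ contributes a cyclic permutation whose parity is controlled by the parity of $n_i$, so the layers form a single orbit precisely under the stated numerical condition; the cases $n=1$, $n=2$, and $n\geq 3$ are then settled by elementary permutation counting.

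The boundary slope formula is the step I expect to be the main obstacle. The strategy is to compute the twisting of $\partial S$ on $\partial \tau(K)$ relative to the preferred longitude $\ell$ by summing contributions band-by-band, and then to normalize against a fixed reference surface. Each band contributes $\pm 2$ to the slope, with the sign $(-1)^{i+1}$ coming from the alternating orientation forced by consecutive plumbings and weighted by the sign of $n_i$; this yields the $2(n^+ - n^-)$ term. To pin down the absolute framing, I would use that $\beta/\alpha$ admits a unique all-even continued fraction expansion, corresponding to the Seifert algorithm applied to an alternating diagram, and that the resulting surface is a Seifert surface, which has boundary slope $0$. The correction $-2(n_0^+ - n_0^-)$ then appears as the slope offset between a general expansion and this canonical one. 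Carefully verifying this calibration, with the sign conventions on twist bands (which differ from Hatcher--Thurston's, as the authors note) and the parity case analysis for $k$, is the fiddly bookkeeping that constitutes the bulk of the work; once it is in place the theorem follows.
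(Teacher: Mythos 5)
This statement is not proved in the paper at all: the authors import it verbatim from Hatcher--Thurston (\cite{HT}, Theorem 1, Propositions 1 and 2), so there is no in-paper argument to compare against. Your sketch is, in outline, the Hatcher--Thurston strategy, but as a proof it leaves the genuinely hard content asserted rather than established. The central claim --- that an essential surface, after the innermost-disk/outermost-arc cleanup, yields a \emph{monotone} edgepath in the diagram of curve systems on the 4-punctured sphere and hence a continued fraction expansion with every $|n_i|\geq 2$ --- is exactly the substance of Theorem 1 of \cite{HT}, and it does not follow from positioning the surface with respect to a single bridge sphere $P$. One needs the sweep-out of the exterior by a one-parameter family of level 4-punctured spheres between the two critical levels, an analysis of how the slope of the intersection curve system changes as the level passes the bridges, and a minimality argument for the resulting edgepath; the non-isotopy of surfaces attached to distinct expansions also comes from this edgepath invariance, which you assert ("the edgepath is recoverable from $S\cap P$") without the machinery that makes it true. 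Saying that $|n_i|=1$ "produces a compressing disk in one of the tangles" is the right heuristic but is likewise a statement that requires the edgepath analysis (non-minimal paths give compressible or $\partial$-compressible carried surfaces).

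Two smaller inaccuracies are worth flagging. For connectedness, a band with an odd number of half-twists joins the $n$ parallel sheets by the order-reversing permutation $j\mapsto n+1-j$ (an involution), not a cyclic permutation, and the stated criterion comes from analyzing when the resulting identifications make the union connected; "parity of a cyclic permutation" would not give the trichotomy $n=1$, $n=2$ with one odd $n_i$, $n>2$ with two odd $n_i$. For the slope calibration, the correct anchor is that the unique all-even expansion carries (for $n=1$) the orientable spanning surface --- orientable because even twisting lets the plumbed bands be coherently oriented --- hence a Seifert surface with slope $0$; it is not obtained by applying Seifert's algorithm to the alternating diagram (that diagram corresponds to the all-positive expansion $M$). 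These are fixable, but together with the missing sweep-out argument they mean your proposal is an outline of the cited proof rather than a proof.
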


It follows from this theorem that a given branched surface may support one, several, or infinitely many connected, non-isotopic essential surfaces in the knot complement, all with the same boundary slope; indeed, the theorem completely identifies when this occurs. We are interested in the question of when a knot complement contains non-isotopic surfaces which are supported by distinct branched surfaces with identical boundary slopes.

\begin{definition}
A boundary slope of a 2-bridge knot $K(\alpha, \beta)$ is {\em repeated} if there exist distinct continued fraction expansions $[n_1,n_2,...,n_k]$ and $[m_1,m_2,...,m_l]$ such that the boundary slopes of $S_n(t_1,t_2,...,t_k)$ and $S_m(u_1,u_2,...,u_l)$ are equal.
\end{definition}

\section{Continued fraction expansions} 
In order to identify instances of repeated boundary slopes, we must better understand how to generate the collection of all continued fraction expansions $[n_1,n_2,...,n_k]$ of a given fraction $\beta/\alpha$ with $|n_i| \geq 2$ for every $i$. 

First, note that any fraction $\beta/\alpha$ may be represented by a unique continued fraction expansion $M = [m_1,m_2,...,m_k]$ with all $m_i >0$ and $m_k \geq 2$. To generate this expansion, choose each $m_i$ to be maximal such that $[m_1,m_2,...,m_i] \geq \beta/\alpha$. Note that here some $m_i$ with $i \neq k$ may equal 1, in which case $M$ will not correspond to a branched surface supporting essential surfaces in the knot complement. Nonetheless we use $M$ to generate all of the continued fraction expansions of $\beta/\alpha$ which do support essential surfaces.

To find all desired continued fraction expansions of $\beta/\alpha$ we use the following relations, where $a$ is an integer greater than 1 in the first equation and a positive integer in the second, and where $x$ is a nonzero rational number:

\begin{eqnarray}
	\frac{1}{a} & = & 1 + \frac{1}{\frac{-a}{a-1}} \nonumber \\
			& = & 1 + \frac{1}{-2 + \frac{1}{2 + \frac{1}{-2 + \frac{}							{\ddots + \frac{1}{\pm 2}}}}} \\
	\frac{1}{a + \frac{1}{x}} & = & \frac{x}{ax+1} \nonumber \\
			& = & 1 - \frac{(a-1)x + 1}{ax+1} \nonumber \\
			& = & 1 + \frac{1}{-2 + \frac{1}{2 + \frac{1}{-2 + \frac{}						{\ddots + \frac{1}{\pm (x+1)}}}}} \end{eqnarray}
Here, in each case there are $a-1$ terms in the alternating sequence $-2,2,-2,2,...,\pm 2$, and in the second formula the sign of $x+1$ is chosen to have the opposite sign of the final term in the sequence $-2,2,-2,2,...,\pm 2$, so that the entire sequence $-2,2,-2,2,...,\pm2,\mp (x+1)$ is alternating. Note that equation (3.1) is a special case of equation (3.2) if we set $x=1$ and replace $a$ in equation (3.2) with $a-1$.

We will use these relations to identify all continued fraction expansions of $\beta/\alpha$. First we require two definitions. Let $M= [m_1,m_2,...,m_k]$ be the unique continued fraction expansion of $\beta/\alpha$ such that $m_i>0$ for $1 \leq i \leq k$ and $m_k \geq 2$.  

\begin{definition} An {\em allowable sub-tuple} for $M$ is an ordered sub-tuple $(i_1,i_2, ..., i_j)$ of $(1,2,...,k)$ satisfying:
	\begin{itemize}
		\item $|i_{\ell+1} - i_{\ell}| \geq 2$ for $1 \leq \ell \leq j-1$ \\
		\item If $m_i = 1$ for any $i$ in $\{1,2,...,k\}$ then at least 				one of $i-1$, $i$, or $i+1$ is included in  						$(i_1,i_2,...,i_j)$
	\end{itemize}
\end{definition}

\begin{definition} Let $I = (i_1,i_2, ..., i_j)$ be an allowable sub-tuple for $M$. The corresponding continued fraction $M_I$ for $\beta/\alpha$ is the continued fraction expansion obtained from $M$ as follows:
	\begin{itemize}
		\item For $1 \leq \ell \leq j$ replace $m_{i_\ell}$ with an 				alternating sequence $-2,2,-2,...,\pm 2$ of length 				$m_{i_\ell} - 1$. If $m_{i_\ell} = 1$, this means that 				the term is simply deleted from $M$. \\
		\item For $1 \leq \ell \leq j$, add 1 to each of $m_{(i_\ell - 				1)}$ and $m_{(i_\ell + 1)}$. Note that if $|i_{\ell+1} - 				i_{\ell}| = 2$ this means we add 2 to $m_{(i_\ell+1)}$. 			If $i_\ell = 1$, then 	only $m_2$ is adjusted, and if 				$i_\ell = k$ then only $m_{k-1}$ is adjusted.	\\
		\item Adjust the sign of each term of the resulting 					sequence by multiplying each $m_i$ or every term of 			its replacement by $\prod_{i_\ell < i} (-1)^{m_{i_\ell}}$.  
	\end{itemize}
\end{definition}

As an example, note that if $\alpha = 73$ and $\beta = 26$, then $M = [2, 1, 4, 5]$. If $I = (2,4)$, a sub-tuple of $(1,2,3,4)$, then $M_I = [3, -6, 2, -2, 2, -2]$.

\begin{proposition}
Let $K(\alpha,\beta)$ be a 2-bridge knot in $S^3$, and let $M= [m_1,m_2,...,m_k]$ be the unique continued fraction expansion of $\beta/\alpha$ such that $m_i>0$ for $1 \leq i \leq k$.  The continued fraction expansions of $\beta/\alpha$ corresponding to branched surfaces supporting essential surfaces in $S^3 - \tau(K)$ are precisely the continued fractions $M_I$ corresponding to allowable sub-tuples $I$ for $M$.
\end{proposition}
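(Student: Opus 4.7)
The plan is to prove the proposition by establishing two inclusions: first, that every $M_I$ with $I$ an allowable sub-tuple is a continued fraction expansion of $\beta/\alpha$ with all entries of absolute value at least $2$, and conversely, that every such expansion has the form $M_I$ for some allowable sub-tuple $I$.

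For the forward direction, I would argue by induction on $|I|$, processing the indices $i_j > i_{j-1} > \cdots > i_1$ from right to left. When $I = \emptyset$, allowability forces every $m_i \geq 2$, so $M = M_\emptyset$ is already a valid expansion. For the inductive step, I pick the largest index $i_j \in I$ and apply identity $(3.2)$ to the sub-expression $1/(m_{i_j} + 1/x)$ occurring inside the continued fraction, where $x$ is the tail to the right of position $i_j$ (using identity $(3.1)$, the $x = 1$ special case, when $i_j = k$). This produces the alternating block $-2, 2, \ldots, \pm 2$ of length $m_{i_j} - 1$ at position $i_j$, increments $m_{i_j - 1}$ by $1$, and places a sign of $(-1)^{m_{i_j}}$ on $x + 1$ that propagates through every subsequent term. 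Iterating this at $i_{j-1}, \ldots, i_1$ and tracking the cumulative effect of each sign flip yields exactly the prescription $\prod_{i_\ell < i}(-1)^{m_{i_\ell}}$. The gap condition $|i_{\ell+1}-i_\ell|\geq 2$ keeps the neighbor increments from adjacent expansions from colliding destructively; when the gap equals $2$, the single shared neighbor receives $+1$ from each side, matching the ``add $2$'' clause in the definition of $M_I$. The clause governing $m_i = 1$ guarantees that each unit entry in $M$ is either deleted (when expanded, its replacement has length zero) or absorbed (when adjacent to an expanded index, it receives $+1$ and becomes $2$), so no $\pm 1$ entry appears in $M_I$.

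For the reverse direction, let $N = [n_1, \ldots, n_\ell]$ be any continued fraction expansion of $\beta/\alpha$ with $|n_i|\geq 2$. I would reverse-engineer an allowable $I$ by locating the maximal alternating $\pm 2$ blocks in $N$, contracting each back into a single positive entry via the inverse of identity $(3.2)$, subtracting $1$ from each adjacent neighbor, and undoing the cumulative sign factor. Since $\beta/\alpha$ admits a unique all-positive continued fraction expansion with last entry at least $2$, the reverse process must terminate precisely at $M$, and the ordered set of positions contracted is an allowable sub-tuple $I$ with $M_I = N$. Allowability is automatic: two contracted positions cannot be adjacent in $M$ without forcing a zero at their decremented shared neighbor, and any $m_i = 1$ not absorbed by contraction at $i$ or at a neighbor would leave a $\pm 1$ entry in $N$, contradicting the hypothesis $|n_i|\geq 2$.

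The principal technical obstacle is the sign bookkeeping, since each application of $(3.2)$ flips the sign of every subsequent term of the continued fraction, and one must verify that the composition of these local flips produces exactly the global factor $\prod_{i_\ell < i}(-1)^{m_{i_\ell}}$ on each position of $M_I$. A related subtlety is showing that the ``maximal alternating $\pm 2$ block'' in the reverse process is unambiguously identifiable: here the gap condition and the alternating sign pattern together ensure that two contracted blocks cannot abut or be mistaken for a single longer one, so the reverse construction is well-defined and mutually inverse to the forward construction.
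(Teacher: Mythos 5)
Your forward direction is sound and is essentially the paper's argument: $M_I$ is obtained from $M$ by applying (3.2) at each chosen index (and (3.1) when $i_j=k$), with the sign bookkeeping you describe, and the allowability conditions guarantee all resulting entries have absolute value at least $2$. The genuine gap is in your reverse direction, specifically in the claim that the expansion blocks inside an arbitrary $N=[n_1,\ldots,n_\ell]$ can be identified as the maximal alternating $\pm 2$ runs, and that ``the gap condition and the alternating sign pattern together ensure'' this identification is unambiguous. That claim is false, for two reasons tied precisely to the entries $m_i=1$ that the allowability condition exists to handle. First, an expanded index with $m_{i_\ell}=1$ produces a block of length $m_{i_\ell}-1=0$: no $\pm 2$ run at all, only a sign flip and incremented neighbors. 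Concretely, for $K(11,3)$ one has $M=[3,1,2]$ and $M_{(2)}=[4,-3]$; this $N$ has no $\pm 2$ entries, so your contraction process cannot even begin, yet it is not all-positive and must still be exhibited as some $M_I$. Second, the incremented neighbor of a block always continues the sign alternation (the block ends in $\sigma(-1)^{m_{i_\ell}-1}2$ and the next entry carries sign $\sigma(-1)^{m_{i_\ell}}$), so whenever $m_{i_\ell+1}=1$ that neighbor is itself $\pm 2$ and the maximal alternating run strictly overshoots the true block. For the same knot, $M_{(1)}=[-2,2,-2,-2]$: the expansion block is positions $1$--$2$, but the maximal alternating run is positions $1$--$3$; contracting the run yields $[4,-3]$, at which point the process stalls short of $M$, so your procedure is neither well-defined in the way you assert nor inverse to the forward construction.

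The paper avoids this by not trying to recognize blocks as $\pm2$ runs at all. Its reverse process scans for the \emph{first negative entry} $n_{i_1}$, then takes the minimal $j_1\geq i_1$ at which either $|n_{j_1}|\neq 2$ or the sign fails to alternate into the \emph{next} entry; the entries $n_{i_1},\ldots,n_{j_1-1}$ are contracted to the integer $j_1-i_1+1$ (a ``$1$'' is inserted when $j_1=i_1$, which is exactly what handles your first failure mode), $n_{j_1}$ is treated as the incremented neighbor and replaced by $|n_{j_1}|-1$ (which is what handles the overshoot in your second failure mode), and the tail signs are flipped. This is iterated until an all-positive expansion is reached, which is identified with $M$ only at the end via uniqueness, with a separate case when the result ends in a trailing $1$ (your proposal also omits this case, which does occur, e.g.\ for $M=[2,1,4,5]$, $N=[3,-6,2,-2,2,-2]$). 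To repair your argument you would need to replace the ``maximal run'' criterion with a look-ahead stopping rule of this kind (or otherwise prove termination at $M$ by a different invariant), and add the trailing-$1$ and inserted-$1$ cases; as written, the reverse inclusion is not established.
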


\begin{proof}
Note that the corresponding fraction $M_I$ is obtained from $M$ by applications of (3.2) to each of the quotients $m_{i_1},m_{i_2},...,m_{i_{j-1}}$ and by applying either (3.1) if $i_j = k$ or (3.2) if $i_j<k$ to $m_{i_j}$. Thus $M_I$ is a continued fraction expansion of $\beta/\alpha$. Moreover the second condition of Definition 3.1 ensures that each quotient of $M_I = [q_1,q_2,...,q_u]$ will satisfy $|q_i| \geq 2$ for each $i$, so the corresponding branched surface will support essential surfaces in the knot complement. It remains to be seen that any continued fraction expansion of $\beta/\alpha$ which supports essential surfaces in the knot complement is of the form $M_I$ for some allowable sub-tuple $I$. 

Let $N = [n_1,n_2,...,n_\ell]$ be any continued fraction of $\beta/\alpha$ for which $|n_j| \geq 2$ for every $j$. Let $i_1>0$ be minimal such that $n_{i_1} <0$. Form a new continued fraction expansion $P_1$ from $N$ as follows:
	\begin{itemize}
		\item If $i_1 >1$, subtract 1 from $n_{(i_1-1)}$. \\
		\item Let $j_1 \geq i_1$ be minimal such that 
			either $|n_{j_1}| \neq 2$ or $\mbox{sign }n_{j_1} = 
			\mbox{sign }n_{(j_1+1)}$. Replace the 
			$(j_1- i_1)$-tuple 
			$(n_{i_1},n_{(i_1+1)},...,n_{(j_1-1)})$
			with the number $j_1 - i_1 + 1$. (Here note that if 
			$j_1 = i_1$ we simply insert a 1 before $n_{j_1}$.) \\
			\item Replace $n_{j_1}$ with 
			$|n_{j_1}|-1$. \\
			\item Multiply all terms $n_j$ with $j>j_1$ by
			$(-1)^{(j_1 - i_1 + 1)}$.
		\end{itemize}
Note that $N$ is obtained from $P_1$ by applying (3.2) (or applying (3.1) if $i_1 = \ell$) to $P_1$, letting $a$ be the $i_1^{th}$ term of $P_1$. Therefore $P_1$ is a continued fraction expansion of $\beta/\alpha$. Moreover the terms of $P_1$ replacing the first $j_1$ terms of $N$ are all positive.

Repeat the process above with $P_1$ in the role of $N$ to form a continued fraction expansion $P_2$ from $P_1$, letting $i_2>0$ be minimal such that the $i_2^{th}$ term of $P_1$ is negative. Note that $i_2 > i_1$ since the first $i_1$ terms of $P_1$ are positive by construction. 

Iterate as needed to obtain a continued fraction expansion $P_r = [p_1,p_2,...,p_s]$ of $\beta/\alpha$ with $p_i>0$ for every $i$, together with a sub-tuple $(i_1,i_2,...,i_t)$ of $(1,2,...,s)$. If $p_s \neq 1$ then $P_r$ is a continued fraction expansion of $\beta/\alpha$ with $p_i>0$ for every $i$ and with final term $\geq 2$. It follows that $M = P_r$. If $p_s = 1$, then again by the uniqueness of $M$ we have $M = [p_1,p_2,...,p_{(s-1)}+1]$. Therefore if $i_t<s$ we see by construction  that $N = M_I$, where $I = (i_1,i_2,...,i_t)$. Finally, if $i_t = s$ note that $M$ is obtained from $P_r$ by applying (3.1) with $a = p_s$. Hence $N = M_I$, where $I = (i_1,i_2,...,i_{t-1})$.

\end{proof}

Thus the branched surfaces of interest to us arise from the various continued fraction expansions $M_I$ for allowable sub-tuples $I$ for $M$. In the final section we determine the boundary slopes of the corresponding surfaces and examine the prevalence of repeated boundary slopes.

We remark that the description of continued fraction expansions in terms of allowable sub-tuples provided here may be reinterpreted geometrically in terms of the {\it state surfaces} (or {\it essential spanning surfaces}) studied in \cite{Oz}, \cite{AK}, \cite {CT},  \cite{FKP1}, \cite{FKP2}, \cite{FKP3}, and \cite{FKP4}. Specifically, the continued fraction $M$ corresponds to an alternating 2-bridge diagram of the knot, and a choice of allowable sub-tuple corresponds to a choice of chains of 1/2-twists which are to be resolved vertically in the 2-bridge diagram picture. All other chains of 1/2-twists are resolved horizontally, giving a choice of state circles for the knot. When these are joined by half-twisted bands an essential surface with boundary the knot is obtained. We will not use this geometric interpretation here, but this may be of independent interest.

\section{Repeated boundary slopes}

Now recall that the boundary slope of the surfaces supported by a given branched surface are given by 
$$2[(n^+ - n^-) - (n^+_0 - n^-_0)]$$
	where $n^+ = \#{(-1)^{i+1}n_i>0}$, $n^- = \#{(-1)^{i+1}n_i<0}$, and $n^+_0$ and $n^-_0$ are the analogous counts for the unique continued fraction expansion of $\beta/\alpha$ with each $n_i$ even.
Since $n_0^+$ and $n_0^-$ are independent of the sub-tuple $I$, the continued fraction expansions $M_I$ and $M_{I'}$ give rise to surfaces with identical boundary slopes if and only if the corresponding values $n^+ - n^-$ agree. Our next proposition gives a formula for $n^+ - n^-$ in terms of  $M$ and $I$.

\begin{theorem}\label{slope}
 Let $I = (i_1, i_2, ..., i_j)$ be an allowable sub-tuple of $(1,2,...,k)$, and let $M_I$ be the associated continued fraction expansion of $\beta/\alpha$. Then
$$n^+ - n^- = \delta + \sum_{\ell = 1}^j (-1)^{i_\ell} m_{i_\ell},$$
where $\delta = 0$ if $k$ is even and $\delta = 1$ if $k$ is odd.
\end{theorem}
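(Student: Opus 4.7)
The plan is to compute $n^+ - n^-$ for $M_I$ directly from the construction of Definition 3.2, by tracking for each original index $i \in \{1,\ldots,k\}$ the positions and signs of the terms of $M_I$ that arise from $m_i$. Since $n^+ - n^- = \sum_{i'} (-1)^{i'+1}\operatorname{sign}(q_{i'})$ for $M_I = [q_1,\ldots,q_u]$, the whole claim reduces to a careful bookkeeping calculation. The base case $I = \emptyset$ reads $\sum_{i=1}^k (-1)^{i+1} = \delta$ and is immediate.

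For the setup, I introduce
$$
\operatorname{pos}(i) = i + \sum_{i_\ell < i}(m_{i_\ell}-2), \qquad \epsilon_i = \prod_{i_\ell < i}(-1)^{m_{i_\ell}},
$$
where $\operatorname{pos}(i)$ is the index in $M_I$ of the first term arising from $m_i$ (each expansion at $i_\ell$ turns one entry into $m_{i_\ell}-1$ entries) and $\epsilon_i$ is the cumulative sign multiplier of Definition 3.2 acting on that block. The whole argument pivots on the parity identity
$$
(-1)^{\operatorname{pos}(i)}\,\epsilon_i = (-1)^i,
$$
which is immediate from $(-1)^{m_{i_\ell}-2} = (-1)^{m_{i_\ell}}$.

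With this in hand I split the signed sum according to whether $i \in I$. For $i \notin I$ the single term from $m_i$ is positive before sign adjustment and has sign $\epsilon_i$ afterward, so it contributes $(-1)^{\operatorname{pos}(i)+1}\epsilon_i = (-1)^{i+1}$ to $n^+ - n^-$, independent of $I$. For $i = i_\ell \in I$ the $m_{i_\ell}-1$ replacement terms sit at positions $\operatorname{pos}(i_\ell)+s$ and carry signs $(-1)^{s+1}\epsilon_{i_\ell}$; the $s$-dependence cancels, so each contributes $(-1)^{\operatorname{pos}(i_\ell)}\epsilon_{i_\ell} = (-1)^{i_\ell}$, and the block contributes $(m_{i_\ell}-1)(-1)^{i_\ell}$ in total. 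Writing $\sum_{i \notin I}(-1)^{i+1} = \delta + \sum_\ell (-1)^{i_\ell}$ and combining with the $I$-contributions, the sum collapses to $\delta + \sum_{\ell=1}^j (-1)^{i_\ell} m_{i_\ell}$, as required.

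The substantive step is the parity identity above; once it is noted, the cumulative sign multipliers and the positional shifts introduced by previous expansions cancel modulo $2$, so each original index contributes to $n^+ - n^-$ as if no prior expansions had occurred. The degenerate case $m_{i_\ell}=1$ requires no special treatment: the empty replacement yields a zero contribution, matching $(m_{i_\ell}-1)(-1)^{i_\ell}=0$, and the $+1$ bumps to $m_{i_\ell \pm 1}$ do not change any signs and so do not enter the count.
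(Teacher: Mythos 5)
Your proof is correct, and it verifies every point where care is needed: the position formula $\operatorname{pos}(i)=i+\sum_{i_\ell<i}(m_{i_\ell}-2)$, the fact that the $+1$ bumps never change a sign (allowability guarantees neighbors of indices in $I$ are not themselves in $I$), and the degenerate case $m_{i_\ell}=1$. The underlying computation is the same as the paper's -- both arguments come down to the observation that the number of terms of $M_I$ preceding the block arising from $m_i$ has parity governed by $i$ and the cumulative sign $\epsilon_i=\prod_{i_\ell<i}(-1)^{m_{i_\ell}}$ -- but you organize it differently. The paper partitions $M$ into consecutive pairs $(m_i,m_{i+1})$ with $i$ odd (plus a singleton $m_k$ when $k$ is odd) and computes the contribution of each whole chain, which forces a case analysis (neither index in $I$, $i\in I$, $i+1\in I$, and the separate $m_k$ cases) and produces $\delta$ somewhat indirectly through the singleton. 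You instead work index by index, isolate the clean parity identity $(-1)^{\operatorname{pos}(i)}\epsilon_i=(-1)^i$, and get a uniform, case-free bookkeeping in which each $i\notin I$ contributes $(-1)^{i+1}$ exactly as in the unexpanded fraction and each $i_\ell\in I$ contributes $(m_{i_\ell}-1)(-1)^{i_\ell}$, with $\delta$ appearing transparently as the alternating sum $\sum_{i=1}^k(-1)^{i+1}$. Your version is somewhat cleaner and more mechanical; the paper's pairing has the mild advantage of exhibiting directly that each expanded chain lies entirely in $n^+$ or entirely in $n^-$, which is the geometric statement behind the slope formula.
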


\begin{proof} 
Partition $M$ into pairs $(m_i,m_{i+1})$ for each $i$ odd, $i<k$, leaving $m_k$ as a singleton if $k$ is odd. We determine the contribution of each pair to $n^+ - n^-$ for $M_I$. 

Since for any allowable sub-tuple we have  $|i_{\ell+1} - i_{\ell}| \geq 2$ for $1 \leq \ell \leq j-1$ at most one of $i$ and $i+1$ is in $I$. If neither $i$ nor $i+1$ is in $I$ then the terms in $M_I$ corresponding to $m_i$ and $m_{i+1}$ in $M$ have the same signs. Then one contributes to $n^+$ and the other to $n^-$, and the pair contributes 0 to $n^+ - n^-$.

If $i \in I$ then the chain of terms in $M_I$ corresponding to the pair $(m_i,m_{i+1})$ is the alternating sequence $\prod_{i_\ell \in I, i_\ell< i} (-1)^{m_{i_\ell}} [-2,2,...,\mp 2, \pm y]$ of length $m_i$, where $y=m_{i+1}+1$ if $i+2 \notin I$ and $y = m_{i+1}+2$ if $i+2 \in I$. The number of terms of $M_I$ before this chain is 
$$\sum_{s<i,s \notin I} 1 + \sum_{s<i,s \in I} (m_s - 1) = \sum_{s<i} 1 + \sum_{s<i,s \in I} m_s + \sum_{s<i, s \in I} (-2).$$
In this final expression, since $i$ is odd, the first sum is even, and clearly the last sum is even. Further $\sum_{s<i,s \in I} m_s$ is even if and only if  $\prod_{i_\ell \in I, i_\ell< i} (-1)^{m_{i_\ell}}$ is 1.  Thus, the chain of terms in $M_I$ corresponding to $(m_i,m_{i+1})$ either begins with a  negative number in an odd position or begins with a positive number in an even position. It follows that the entire chain contributes to $n^-$, and the contribution of the chain to $n^+ - n^-$ is $-m_i = (-1)^i m_i$.

If $i+1 \in I$ then the chain of terms in $M_I$ corresponding to the pair $(m_i,m_{i+1})$ is the alternating sequence $\prod_{i_\ell \in I, i_\ell < i} (-1)^{m_{i_\ell}} [z, -2,2,...,\mp 2]$ of length $m_{i+1}$, where $z = m_i+1$ if $i-1 \notin I$ and $z=m_i+2$ if $i-1 \in I$. As above, the sign $\prod_{i_\ell \in I, i_\ell< i} (-1)^{m_{i_\ell}}$ of this chain is positive if the number of terms in $M_I$ before the chain 
$ \sum_{s<i} 1 + \sum_{s<i,s \in I} m_s + \sum_{s<i, s \in I} (-2)$ is even.
Thus, the chain of terms in $M_I$ corresponding to $(m_i,m_{i+1})$ either begins with a  positive number in an odd position or begins with a negative number in an even position. It follows that the chain contributes to $n^+$, and the contribution of the chain to $n^+ - n^-$ is $m_{i+1} = (-1)^{i+1} m_{i+1}$.

Finally consider the contribution of $m_k$ if $k$ is odd. If $k \notin I$, this becomes a single term in $M_I$ equal to either $\prod_{i_\ell \in I}(-1)^{m_{i_\ell}} m_k$ if $k-1 \notin I$ or $\prod_{i_\ell \in I}(-1)^{m_{i_\ell}} m_k +1$ if $k-1 \in I$. As before we find this is either a positive number in an odd slot or a negative number in an even slot, so the term contributes 1 to $n^+$. On the other hand if $k \in I$ then the corresponding chain of terms in $M_I$ is $\prod_{i_\ell \in I, i_\ell <k} (-1)^{m_{i_\ell}} [-2,2,-2,... \mp 2]$ of length $m_k - 1$. Using a similar analysis as that above, we find this chain either begins with a negative number in an odd slot or begins with a positive number in an even slot. Therefore the chain contributes to $n^-$, and we see that the contribution to $n^+ - n^-$ is $-(m_k - 1) = (-1)^k m_k + 1.$ 

Adding these contributions yields $n^+ - n^- = \delta + \sum_{i_\ell \in I} (-1)^{i_\ell} m_{i_\ell}$, as desired.
\nopagebreak
\end{proof}
The following corollary is immediate, since $\delta$ depends only on $M$:
\begin{corollary}\label{repeats}
Let $K(\alpha, \beta)$ be a 2-bridge knot, and let $M = [m_1,m_2,...,m_k]$ be the unique continued fraction expansion of $\beta/\alpha$ for which $m_i > 0$ for $i =1,2,...,k$ and $m_k\geq 2$. Then $K$ has a repeated boundary slope if and only if there exist allowable sub-tuples $I= (i_1,i_2,...,i_j)$ and $J = (j_1,j_2,...,j_\ell)$ of $(1,2,...,k)$ such that 
$$\sum_{s = 1}^j (-1)^{i_s}m_{i_s} = \sum_{t = 1}^\ell (-1)^{j_t} m_{j_t}.$$
\end{corollary}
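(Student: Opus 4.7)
The plan is to derive the corollary in a few lines by assembling three pieces already in hand. Recall from the formula at the start of Section 4 that the boundary slope of any essential surface supported by the branched surface associated with a continued fraction expansion $N$ of $\beta/\alpha$ equals $2[(n^+ - n^-) - (n_0^+ - n_0^-)]$, where $n_0^\pm$ are computed from the unique even continued fraction expansion of $\beta/\alpha$ and so depend only on the knot. Consequently, two surfaces of this form share a boundary slope if and only if the quantities $n^+ - n^-$ read off from their respective continued fraction expansions agree.

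Next, I would apply Proposition 3.3: every continued fraction expansion $[n_1,\ldots,n_\ell]$ of $\beta/\alpha$ with $|n_i|\geq 2$ (the only ones that correspond to branched surfaces supporting essential surfaces) has the form $M_I$ for some allowable sub-tuple $I$ of $(1,2,\ldots,k)$. Thus the existence of a repeated boundary slope is equivalent to the existence of two distinct allowable sub-tuples $I$ and $J$ for which the corresponding values of $n^+ - n^-$ coincide.

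Finally, I would invoke Theorem \ref{slope}, which evaluates $n^+ - n^-$ as $\delta + \sum_{s=1}^{j}(-1)^{i_s} m_{i_s}$ for $I=(i_1,\ldots,i_j)$, with $\delta\in\{0,1\}$ depending only on the parity of $k$, hence only on $M$. Since $\delta$ cancels upon equating the expressions arising from $I$ and $J$, the slope-equality condition reduces to the displayed identity in the corollary, completing the proof.

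The only subtlety, rather than a real obstacle, is to confirm that distinct allowable sub-tuples produce distinct continued fraction expansions $M_I$, so that ``$I\neq J$'' genuinely matches ``distinct continued fraction expansions'' in the definition of repeated slope. This injectivity is immediate from Definition 3.2: the positions and lengths of the inserted alternating blocks $-2,2,\ldots,\pm 2$ inside $M_I$ determine both the indices $i_\ell$ and the entries $m_{i_\ell}$ uniquely, so the map $I\mapsto M_I$ is injective on allowable sub-tuples.
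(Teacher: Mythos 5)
Your proposal is correct and follows essentially the same route as the paper, which simply observes that the corollary is immediate from Theorem \ref{slope} together with the facts that $\delta$ and the quantities $n_0^{\pm}$ depend only on $M$ (and that, by Proposition 3.3, every relevant continued fraction expansion is some $M_I$). Your added remark on the injectivity of $I \mapsto M_I$ is a reasonable bit of extra care but does not change the argument.
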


Thus, it is easy to build examples of knots with repeated boundary slopes. One collection of examples arises from symmetry. As above, let $K(\alpha,\beta)$ be a 2-bridge knot in $S^3$, and let $M= [m_1,m_2,...,m_k]$ be the unique continued fraction expansion of $\beta/\alpha$ such that $m_i>0$ for $1 \leq i \leq k$ and $m_k \geq 2$. We restrict our attention to knots $K$ for which $M$ is symmetric, so that $k$ is odd and $m_i = m_{k+1-i}$ for $1 \leq i \leq k$. In this case it is particularly easy to find examples of repeated slopes.

\begin{definition}
Let $I = (i_1,i_2,...,i_j)$ be an allowable sub-tuple for $M$. The {\em dual} $\tilde{I}$ of $I$ is the sub-tuple $(k+1-i_j,k +1- i_{j-1},...,k+1-i_1)$.
\end{definition}

\begin{proposition}
Assume $M$ satisfies the symmetry conditions that $k$ is odd and $m_i = m_{k+1-i}$ for  $1 \leq i \leq k$. Then for any allowable sub-tuple I, the boundary slopes of surfaces corresponding to $M_I$ and $M_{\tilde{I}}$ are equal.
\end{proposition}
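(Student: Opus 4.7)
The plan is to reduce the claim directly to Corollary \ref{repeats} and Theorem \ref{slope}, so that the proposition becomes a short parity-and-symmetry calculation. The slope of $M_I$ depends only on the sum $S(I) := \sum_{s=1}^{j} (-1)^{i_s} m_{i_s}$ (up to the fixed additive constant $\delta$ and the fixed terms $n_0^\pm$), so it suffices to verify two things: that $\tilde I$ is itself an allowable sub-tuple, and that $S(\tilde I) = S(I)$.

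First I would check allowability of $\tilde I = (k+1-i_j, \ldots, k+1-i_1)$. The spacing condition is immediate since $|(k+1-i_{\ell})-(k+1-i_{\ell+1})| = |i_{\ell+1}-i_\ell| \ge 2$. For the "$m_i=1$" condition, if $m_i=1$ for some $i$, then by the symmetry hypothesis $m_{k+1-i} = m_i = 1$ as well, so every index at which $M$ has a $1$ comes in a symmetric pair. The allowability of $I$ at position $i$ gives some index in $\{i-1,i,i+1\}$ lying in $I$, and reflecting by $j \mapsto k+1-j$ exhibits a corresponding index in $\{k-i, k+1-i, k+2-i\}$ lying in $\tilde I$; this is exactly the condition needed at position $k+1-i$ for $\tilde I$.

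Next comes the main calculation. Using the definition of $\tilde I$,
\begin{equation*}
S(\tilde I) = \sum_{s=1}^{j} (-1)^{k+1-i_s} m_{k+1-i_s}.
\end{equation*}
Since $k$ is odd, $k+1$ is even, so $(-1)^{k+1-i_s} = (-1)^{-i_s} = (-1)^{i_s}$; and the symmetry $m_{k+1-i_s} = m_{i_s}$ rewrites the summand as $(-1)^{i_s} m_{i_s}$. Hence $S(\tilde I) = S(I)$, and applying Theorem \ref{slope} (noting that $\delta$ and $n_0^\pm$ depend only on $M$, not on the sub-tuple) shows that the values $n^+-n^-$, and therefore the boundary slopes, coincide.

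I do not anticipate a genuine obstacle here; the only thing requiring a tiny bit of care is the allowability verification for $\tilde I$, since the definition of allowable sub-tuple involves asymmetric-looking conditions near the endpoints, but the symmetry of $M$ handles these automatically. The core of the proposition is the parity observation that reflecting indices through the center of an odd-length tuple preserves each $(-1)^{i_s}$.
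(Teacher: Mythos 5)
Your proof is correct and follows essentially the same route as the paper: apply Theorem \ref{slope} and observe that, since $k$ is odd and $m_{i}=m_{k+1-i}$, each summand $(-1)^{i_s}m_{i_s}$ is unchanged under $i_s\mapsto k+1-i_s$, so the sums for $I$ and $\tilde I$ agree. Your additional verification that $\tilde I$ is itself an allowable sub-tuple is a sound extra check that the paper leaves implicit.
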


\begin{proof} This is an immediate application of Theorem \ref{slope}. We have 

\begin{eqnarray}
	\mbox{slope for }M_I & = & 1 + \sum_{i_\ell \in I} (-1)^{i_\ell} m_{i_\ell} \\
	& = & 1 + \sum_{i_\ell \in I} (-1)^{k+1 - i_\ell} m_{k+1 - i_\ell} \\
	& = & 1 + \sum_{i_\ell \in \tilde{I}} (-1)^{i_\ell} m_{i_\ell} \\
	& = & \mbox{ slope for }M_{\tilde{I}}.
\end{eqnarray}
	
	\end{proof}

We note that if Proposition 3.3 is reinterpreted geometrically as suggested at the conclusion of Section 3, these surfaces demonstrate obvious geometric symmetry corresponding to the numeric symmetry described here.

Of course, most instances of repeated boundary slopes will not arise from the symmetry conditions in the above theorem. For example, the knot $8_{14}$ with $\alpha = 31$ and $\beta = 12$ has $M = [2,1,1,2,2]$, which is non-symmetric. Nonetheless it has two distinct continued fraction expansions corresponding to surfaces with slope 0 and two continued fraction expansions corresponding to surfaces with slope 4. The slope zero surfaces correspond to the sub-tuples $(1,3)$ and $(3,5)$, while the slope 4 surfaces arise from the sub-tuples $(3)$ and $(2,5)$.

Finally, we note that while repeated boundary slopes are common, it is still possible to find infinite families of knots with no repeated slopes. The following theorem demonstrates a construction for many such families.

\begin{theorem}\label{norepeats}
Let $M = [m_1,m_2,...,m_k]$ be a continued fraction expansion with $m_i>0$ for $1 \leq i \leq k$, $m_k \geq 2$,  such that the corresponding 2-bridge link is a knot $K$. Suppose that $m_i > \sum_{j = 1}^{i-1} m_j$ for $1 \leq i \leq k$. Then $K$ has no repeated boundary slopes.
 \end{theorem}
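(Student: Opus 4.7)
The plan is to apply Corollary \ref{repeats}: it suffices to show that the map
$$\phi \colon I = (i_1, \ldots, i_j) \mapsto \sum_{s=1}^{j} (-1)^{i_s} m_{i_s}$$
is injective on the set of allowable sub-tuples of $(1,2,\ldots,k)$. The hypothesis that each $m_i$ strictly exceeds the total of all earlier $m_j$ is precisely the ``super-increasing'' condition that forces signed subset sums to be unique, so I expect this to follow by a standard largest-term argument, and the gap condition in Definition 3.1 should play no role whatsoever.

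Concretely, suppose for contradiction that $I$ and $J$ are distinct allowable sub-tuples with $\phi(I) = \phi(J)$. Viewing $I$ and $J$ as subsets of $\{1,\ldots,k\}$, I cancel common elements and rewrite the equality as
$$\sum_{i \in I \setminus J} (-1)^i m_i \;=\; \sum_{j \in J \setminus I} (-1)^j m_j.$$
Let $p = \max (I \triangle J)$, which exists since $I \neq J$, and without loss of generality assume $p \in I \setminus J$. Moving the $p$-term to the opposite side and taking absolute values, I get
$$m_p \;\leq\; \Bigl| \sum_{\substack{i \in I \setminus J \\ i < p}} (-1)^i m_i \;-\; \sum_{j \in J \setminus I} (-1)^j m_j \Bigr| \;\leq\; \sum_{i=1}^{p-1} m_i,$$
where the second inequality uses that $I\setminus J$ and $J\setminus I$ are disjoint subsets of $\{1,\ldots,p-1\}$ (after removing $p$), so each $m_i$ with $i<p$ appears at most once on the right. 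This contradicts the hypothesis $m_p > \sum_{i=1}^{p-1} m_i$.

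The only step that requires genuine care is the bookkeeping in the final inequality, namely that disjointness of $I\setminus J$ and $J\setminus J$ ensures no $m_i$ is double-counted when the two sums are combined; everything else is formal. I regard this as the main (modest) obstacle. It is worth noting that the argument never uses the spacing requirement $|i_{\ell+1}-i_\ell|\geq 2$ nor the $m_i=1$ condition from Definition 3.1, so the conclusion is actually that $\phi$ is injective on \emph{all} subsets of $\{1,\ldots,k\}$, from which the no-repeated-slope statement for allowable sub-tuples is an immediate specialization via Corollary \ref{repeats}.
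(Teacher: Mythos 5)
Your proof is correct and takes essentially the same approach as the paper: both exploit the super-increasing hypothesis via a largest-uncancelled-term argument, with your cancellation over the symmetric difference and choice of $p=\max(I\triangle J)$ playing the role of the paper's reduction when the final indices coincide and its comparison of $m_{i_r}$ with the remaining signed sum. Your observation that allowability of the sub-tuples is never needed is consistent with the paper's argument, which likewise uses only the numerical identity from Corollary \ref{repeats}.
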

 
\begin{proof}
Let $I$ and $J$ be distinct allowable sub-tuples for $M$, with $I = (i_1,i_2,...,i_r)$ and $J = (j_1,j_2,...,j_s)$. By Corollary \ref{repeats} , the boundary slopes of $M_I$ and $M_J$ are equal if and only if $\sum_{\ell = 1}^r (-1)^{i_\ell} m_{i_\ell} = \sum_{\ell = 1}^{s} (-1)^{j_\ell} m_{j_\ell}$. Note too that if $i_r = j_s$, then $M_I$ and $M_J$ have the same boundary slope if and only if $M_{I'}$ and $M_{J'}$ have the same slope, where $I' = (i_1,i_2,...,i_{r-1})$ and $J' - (j_1,j_2,...,j_{s-1})$. Thus we may assume that $i_r \neq j_s$. Reversing the roles of $I$ and $J$ as needed, we assume $i_r > j_s$. 

Consider the sum $S = \sum_{\ell=1}^s (-1)^{j_\ell} m_{j_\ell} - \sum_{\ell = 1}^{r-1} (-1)^{i_\ell} m_{i_\ell}$. Note that if $i_t = j_u$ for any $t$ with $1 \leq t \leq r-1$ and $u$ with $1 \leq u \leq s$, then the $m_{i_t} = m_{j_u}$ terms will cancel in $S$, so that each term $m_\ell$ with $1 \leq \ell < i_r$ appears with total coefficient equal to -1, 0, or 1 in $S$. It follows that $|S| < \sum_{\ell = 1}^{i_{r-1}} m_\ell < m_{i_r}$. But then $(-1)^{i_r} m_{i_r} + \sum_{\ell = 1}^{r-1} (-1)^{i_\ell} m_{i_\ell} \neq \sum_{\ell=1}^s (-1)^{j_\ell} m_{j_\ell}$, and hence the boundary slopes for $M_I$ and $M_J$ are distinct.
 \end{proof}
 
Now it is easy to check, for example, that any continued fraction expansion whose entries are all odd and whose length is a multiple of 3 represents a knot. Then the following collection of continued fraction expansions gives an example of an infinite family of knots with no repeated slopes:

\begin{eqnarray*}
	 M_3 &  = & [3, 7, 15] \\
	 M_6 & =  & [3, 7, 15, 31, 63, 127] \\
		&  \cdots &  \\
	M_{3n} & = & [3, 7, 15, 31, \ldots , 3\cdot 2^{3n-1} + \small				\sum_{\ell = 0}^ {3n-2} 2^\ell ]\\
	 & \cdots &
\end{eqnarray*}

\bigskip
\noindent
{\it Acknowledgements.} The authors would like to thank Hans U. Boden for his suggestions for the paper.

\nonumsection{References}

\end{document}